\newtheorem{theorem}{Theorem}[section]
\newtheorem{proposition}[theorem]{Proposition}
\theoremstyle{definition}
\theoremstyle{remark}
\numberwithin{equation}{section}
\newcommand{\be}{\beta}
\newcommand{\de}{\delta}
\newcommand{\ep}{\epsilon}
\newcommand{\ga}{\gamma}
\newcommand{\la}{\lambda}
\newcommand{\vp}{\varphi}
\newcommand{\De}{\Delta}
\newcommand{\Ga}{\Gamma}
\newcommand{\Si}{\Sigma}
\newcommand{\Om}{\Omega}
\newcommand{\Omc}{{\Omega^c}}
\def\RR{\mathbb{R}}
\def\BB{\mathbb{B}}
\def\ZZ{\mathbb{Z}}
\renewcommand\SS{\mathbb{S}}
\newcommand{\cH}{{\mathcal H}}
\newcommand{\cP}{{\mathcal P}}
\newcommand{\cW}{{\mathcal W}}
\newcommand{\pd}{\partial}
\newcommand\minus\backslash
\newcommand\lan\langle
\newcommand\ran\rangle
\newcommand{\Span}{\operatorname{span}}
\renewcommand\leq\leqslant
\renewcommand\geq\geqslant
\newlength{\intwidth}
\newcommand\BOm{\overline\Om}
\begin{document}

\title{Eigenfunctions with prescribed nodal sets}

\author{Alberto Enciso}
\address{Instituto de Ciencias Matem\'aticas, Consejo Superior de
  Investigaciones Cient\'\i ficas, 28049 Madrid, Spain}
\email{aenciso@icmat.es}

\author{Daniel Peralta-Salas}
\address{Instituto de Ciencias Matem\'aticas, Consejo Superior de
  Investigaciones Cient\'\i ficas, 28049 Madrid, Spain}
\email{dperalta@icmat.es}

%
%
\begin{abstract}
  In this paper we consider the problem of prescribing the nodal set
  of low-energy eigenfunctions of the Laplacian. Our main result is that, given
  any separating closed hypersurface $\Si$ in a compact $n$-manifold
  $M$, there is a Riemannian metric on $M$ such that the nodal set of
  its first nontrivial eigenfunction is 
  $\Si$. We present a number of variations on this
  result, which enable us to show, in particular, that the first
  nontrivial eigenfunction can have as many non-degenerate critical
  points as one wishes.
\end{abstract}
\maketitle

\section{Introduction}
\label{S.intro}

The eigenfunctions of the
Laplacian on a closed $n$-dimensional Riemannian manifold $(M,g)$ satisfy the equation
\[
\De u_k=-\la_k u_k\,,
\]
where $0=\la_0<\la_1\leq\la_2\leq\dots$
are the eigenvalues of $M$. In this paper we will be concerned with the geometry of the nodal sets
$u_k^{-1}(0)$ of the eigenfunctions of the Laplacian, which is a
classic topic in geometric analysis with a number of important open
problems~\cite{Ya82,Ya93}. 

Since the first nontrivial eigenfunction is
$u_1$, we will be especially interested in the shape of the nodal set
$u_1^{-1}(0)$. More generally, this paper focuses on the study of the
nodal set of low-energy eigenfunctions; in particular, we will not consider the measure-theoretic properties of the
nodal set $u_k^{-1}(0)$ as $k\to\infty$, which is an important topic that has been thoroughly
studied e.g.\ in~\cite{DF,HS,Jak}.

In fact, the central question that we will address in this paper is the following: given
a hypersurface (i.e., a codimension-1 submanifold) $\Si$ of $M$, is there a Riemannian metric $g$ such
that $\Si$ is the nodal set $u_1^{-1}(0)$?
In the case of the unit two-dimensional sphere, a detailed study of the
possible configurations of the nodal sets of the eigenfunctions has
been carried out by Eremenko, Jakobson and Nadirashvili~\cite{EJN}. In any closed surface, Komendarczyk~\cite{Komen} has
shown that, given any homotopically trivial closed curve $\ga$, there is a
metric such that $u_1^{-1}(0)$ is diffeomorphic to~$\ga$. This result has been extended to arbitrary curves on surfaces
by Lisi~\cite{Lisi} using techniques from contact topology.

Our main theorem asserts that, given an $n$-manifold $M$ and any
closed (i.e., compact boundaryless)  hypersurface $\Si\subset M$, there is a metric $g$ on $M$ such
that $\Si$ is a connected component of the nodal set of the eigenfunction $u_1$. Moreover, if~$\Si$ separates (that is, if the
complement $M\minus \Si$ is the union of two disjoint open sets), then
one can show that the nodal set does not have any other connected
components. More precisely, we have the following
statement. Throughout, we will assume that the hypersurfaces are all
connected, $n\geq3$ and all objects are of class~$C^\infty$.

\begin{theorem}\label{T.main}
Let $\Si$ be a closed orientable hypersurface of $M$. Then there
exists a Riemannian metric $g$ on~$M$ such that $\Si$ is a
connected component of the nodal set $u_1^{-1}(0)$. If $\Si$
separates, then the nodal set is exactly $\Si$.
\end{theorem}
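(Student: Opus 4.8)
The plan is to construct the metric $g$ by hand in a tubular neighborhood of $\Si$ and then extend it appropriately, arranging that the candidate eigenfunction is an explicit product near $\Si$ while using a variational argument to control the low-energy spectrum globally. First I would fix a tubular neighborhood $N\cong\Si\times(-1,1)$ of $\Si$ with coordinate $t$ along the normal direction, and on $N$ take a metric of the warped-product form $g|_N = \dd t^2 + f(t)^2\, h$, where $h$ is an auxiliary metric on $\Si$ and $f>0$ is a profile function to be chosen. With this choice, the function $u(x,t)=\phi(t)$ depending only on $t$ satisfies $\De u = \phi'' + (n-1)\tfrac{f'}{f}\phi'$, so it is an eigenfunction with eigenvalue $\la$ precisely when $\phi$ solves a one-dimensional Sturm--Liouville problem with weight $f^{n-1}$. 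The point is to pick $f$ symmetric about $t=0$ and $\phi$ odd, so that $\{u=0\}\cap N = \Si\times\{0\} = \Si$ and $u$ changes sign cleanly across $\Si$.

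The heart of the matter is the \emph{global} argument: I must ensure that the $\la$ realized in this local model is in fact $\la_1(M,g)$, not some higher eigenvalue, and that $u$ extends to a genuine first eigenfunction on all of $M$. The key device is to make the neighborhood $N$ a very thin ``neck'' and the two pieces $M\minus\Si$ --- call them $M_1$ and $M_2$ --- carry metrics that force a large spectral gap: concretely, I would rescale so that $M_1$ and $M_2$ each have a large first Neumann eigenvalue, while the neck is tuned so that the lowest nonconstant mode of the whole manifold is exactly the odd function supported (to leading order) on the neck. A clean way to organize this is a collapsing/degeneration argument: let the two bulk regions shrink (or the neck's cross-sectional size be taken small) so that in the limit the spectrum of $M$ decouples into the spectrum of the interval problem on the neck plus high-lying bulk modes; then $\la_1(M,g)$ converges to the first nontrivial eigenvalue of the Sturm--Liouville problem on the interval, whose eigenfunction is the odd function $\phi$ vanishing only at $t=0$. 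An eigenvalue-continuity / Courant-nodal-domain argument (the sign-changing function $u$ has exactly two nodal domains, hence corresponds to $u_1$ when its eigenvalue is below $\la_2$) then identifies the nodal set as exactly $\Si$ in the separating case. When $\Si$ does not separate, the same local construction still produces an eigenfunction vanishing on $\Si$, but one only claims $\Si$ is \emph{one} connected component of $u_1^{-1}(0)$, since Courant's theorem no longer pins down the global nodal picture.

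I would carry out the steps in this order: (1) set up the warped-product neck metric and the associated Sturm--Liouville problem, recording the explicit eigenpair $(\la,\phi)$ with $\phi$ odd; (2) choose metrics on $M_1$, $M_2$ and glue smoothly to $g|_N$, introducing a small parameter $\vep$ (neck width or bulk scale) and proving uniform lower bounds on the relevant Neumann/Dirichlet eigenvalues of the bulk pieces; (3) prove the spectral convergence as $\vep\to0$, e.g.\ via min-max with carefully chosen test functions (the odd neck mode for the upper bound on $\la_1$, and a partition-of-unity Mayer--Vietoris-type estimate for the lower bound on $\la_2$); (4) conclude by Courant's nodal domain theorem that for small $\vep$ the first eigenfunction is sign-changing with exactly two nodal domains, forcing $u_1^{-1}(0)=\Si$. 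The main obstacle I anticipate is step (3), and specifically the lower bound showing $\la_2(M,g)$ stays bounded away from $\la_1$: one must rule out a second low-lying mode hiding in the bulk or in a higher transverse mode of the neck, which requires the quantitative decoupling estimates and a judicious choice of how the neck geometry degenerates relative to the bulk geometry. The orientability hypothesis on $\Si$ enters in guaranteeing a well-defined signed normal coordinate $t$ (hence a globally defined odd eigenfunction) when $\Si$ separates; handling the non-orientable or non-separating cases needs the modifications indicated above and is presumably treated as a variation of the main construction.
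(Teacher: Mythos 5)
Your strategy is essentially the one the paper follows (and which goes back to Colin de Verdi\`ere): a product-type metric on a collar $(-1,1)\times\Si$ whose lowest nontrivial Neumann mode is $|\Si|^{-1/2}\cos(\pi(x+1)/2)$, a degeneration of the complementary region so that the low-lying spectrum of $M$ converges to the Neumann spectrum of the collar, min-max with the collar modes (extended harmonically) for the upper bound and a decoupling estimate for the lower bound, and Courant's nodal domain theorem to exclude extra components when $\Si$ separates. Your diagnosis of the main difficulty --- ruling out a second low-lying mode in the bulk or a higher transverse mode of the neck --- also matches the paper, which handles it by taking the first eigenvalue of $g_\Si$ large and by scaling the metric on $M\setminus\Om$ by $\ep$, so that bulk modes cost $O(1/\ep)$ in the Rayleigh quotient.

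There is, however, one genuine gap in your step (4). The true first eigenfunction $u_1$ of the glued metric is not the model function $\phi(t)$ --- $\phi$ is only a quasimode, and $u_1$ will in general depend on the $\Si$-variables as well --- so its nodal set is not literally $\Si\times\{0\}$. What the spectral convergence gives (using that the limit eigenvalue is simple, plus elliptic estimates) is that $u_1\to v_1$ in $C^m$ on compact subsets of the collar; since $dv_1\neq0$ on $\{x=0\}$, Thom's isotopy theorem then yields that the nodal set has a component $\tilde\Si=\Phi(\Si)$ for some diffeomorphism $\Phi$ of $M$ close to the identity. Courant's theorem tells you that $u_1$ has exactly two nodal domains, hence no further components in the separating case, but it cannot force $u_1^{-1}(0)=\Si$ on the nose --- the nodal hypersurface could be any small perturbation of $\Si$ inside the neck. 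The missing final step is to replace $g$ by the pull-back $\Phi^*g$, whose first eigenfunction is $u_1\circ\Phi$ and whose nodal set is then exactly $\Si$. With that correction (and a smoothing of the metric if one uses a discontinuous conformal factor, which your smooth warped-product gluing avoids), your outline completes to a proof.
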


An analogous result for the first $l$ eigenfunctions will be proved in
Theorem~\ref{T.leigen} and Proposition~\ref{P.different}; however, in
this Introduction we have chosen to restrict our attention to the
first nontrivial eigenfunction to keep the statements as simple as
possible. Somewhat related results on level sets with prescribed
topologies were derived, using completely different methods, for
Green's functions in~\cite{JDG} and for harmonic functions in $\RR^n$
in~\cite{Adv}. It should also be noted that the results that we prove
in this paper are robust in the sense that if $g$ is the metric with
eigenfunctions of prescribed nodal sets that we construct in this
paper, then any other metric close enough to $g$ in the $C^{2}$ norm
possesses the same property. In particular, the metric can be taken
analytic whenever the manifold is analytic.

The strategy of the proof of Theorem~\ref{T.main} is quite versatile and can
be used to derive a number of related results. For instance, an easy
application of the underlying philosophy enables us to prove that,
given any $n$-manifold $M$, there is a metric such that the
eigenfunction $u_1$ has as many isolated critical points as one
wishes:

\begin{theorem}\label{T.cp}
Given any positive integer $N$, there is a Riemannian metric on $M$
whose eigenfunction $u_1$ has at least $N$ non-degenerate critical points. 
\end{theorem}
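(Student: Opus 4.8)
The plan is to reduce Theorem~\ref{T.cp} to the same flexibility machinery that underlies Theorem~\ref{T.main}. Fix a small ball $B\subset M$ and choose, in local coordinates on $B$, an explicit function $h$ that has $N$ non-degenerate critical points: the simplest choice is a Morse polynomial such as $h(x)=\sum_{j=1}^{n}\cos(m_j x_j)$ restricted to a cube, or more robustly a harmonic polynomial in $\RR^n$ with $N$ prescribed non-degenerate critical points whose existence is guaranteed by the results of~\cite{Adv}. The point is that $h$ solves $\De_0 h=-\la h$ for the flat Laplacian with $\la=\sum m_j^2>0$ (in the trigonometric case) or $\la=0$ (in the harmonic case, after the obvious modification), and its Hessian is non-degenerate at the $N$ chosen interior points. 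Since non-degeneracy of critical points is an open condition in the $C^2$ topology and the number of critical points can only drop under $C^1$-small perturbations that remain Morse, it suffices to produce a metric $g$ on $M$ whose first nontrivial eigenfunction $u_1$ agrees with $h$ on $B$ up to an error that is small in $C^2(B)$.

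The construction mirrors the proof of Theorem~\ref{T.main}. First I would build a metric $g_0$ on $M$, flat (or conformally flat) on $B$, for which there is an approximate eigenfunction: using a partition of unity, graft $h$ (suitably cut off) onto $B$ and extend it to a global function $v$ on $M$ with $\De_{g_0}v+\la v$ supported away from a slightly smaller ball $B'\Subset B$ and small in the appropriate norm. Next, following the rescaling/deformation idea of the main theorem, one shrinks the region outside $B'$ so that the energy of $v$ there becomes negligible, forcing the bottom of the spectrum of $\De_g$ (restricted to the relevant symmetry class or after projecting out constants) to be captured by the part of $v$ living on $B'$. Concretely, one arranges that $\la$ is the first nonzero eigenvalue and that the genuine eigenfunction $u_1$ satisfies $\|u_1-v\|_{C^2(B')}$ small, by a standard elliptic a-priori estimate once $\|\De_g u_1+\la u_1\|=0$ and $u_1$ is $L^2$-close to $v$. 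The spectral-gap control — ensuring that $\la$ really is $\la_1$ and that no lower-energy eigenfunction sneaks in from the collapsed region — is exactly the ingredient supplied by the analysis behind Theorem~\ref{T.main}, so I would quote it rather than redo it.

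The main obstacle I anticipate is precisely this spectral localization: one must guarantee that deforming the metric to make the exterior region ``thin'' does not create spurious low-lying eigenvalues (e.g.\ from thin necks behaving like one-dimensional pieces), which would ruin the identification of $u_1$ with the perturbation of $h$. This is handled by carefully choosing the geometry of the collapse — keeping the collapsing region uniformly bounded in diameter and with controlled Ricci curvature, or by a Dirichlet-to-Neumann bracketing argument — so that its Neumann spectrum stays bounded below away from $\la$. Once the correct eigenfunction $u_1$ is pinned down and shown to be $C^2$-close to $h$ on $B'$, quantitative Morse theory (the implicit function theorem applied to $\grad_{g} u_1 = 0$ near each of the $N$ critical points of $h$, using that the Hessian of $h$ is invertible there) produces $N$ non-degenerate critical points of $u_1$, completing the proof. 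The robustness remark in the Introduction then applies verbatim, since everything was arranged to be stable under $C^2$-small metric perturbations.
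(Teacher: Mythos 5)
Your overall plan --- make $u_1$ agree in $C^2$ with an explicit Morse function $h$ on a ball and then read off its critical points via the implicit function theorem --- is not something the collapse machinery of Theorem~\ref{T.main} can deliver, and this is a genuine gap rather than a technical detail. When the metric is shrunk outside a region $\Om$, the eigenvalue $\la_{k,\ep}$ converges to the $k^{\mathrm{th}}$ \emph{Neumann} eigenvalue of $\Om$ and $u_{k,\ep}$ converges on compact subsets of $\Om$ to the corresponding \emph{Neumann eigenfunction}; you do not get to choose this limit freely. For $k=1$ and $\Om$ a collar $(-1,1)\times\Si$ the limit is $|\Si|^{-1/2}\cos\frac{\pi(x+1)}2$, which has no interior critical points whatsoever. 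To run your argument you would have to exhibit a region and a metric whose \emph{first nontrivial} Neumann eigenfunction is ($C^2$-close to) a function with $N$ non-degenerate critical points --- but that is essentially the statement being proved, so the reduction is circular. (Two side remarks: $\sum_j\cos(m_jx_j)$ is an eigenfunction of the flat Laplacian only when all the $m_j$ coincide; and a harmonic $h$ corresponds to $\la=0$, i.e.\ to the constant eigenfunction $u_0$, not to $u_1$.)

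The paper sidesteps any local prescription of $u_1$. It applies Theorem~\ref{T.leigen} with $\Si=\pd D$, where $D$ is a domain inside a ball whose closure has total Betti number at least $N$ (e.g.\ $\pd D$ a connected sum of $N$ copies of $\SS^1\times\SS^{n-2}$), so that $\Si$ is exactly the nodal set of $u_1$ and $du_1$ does not vanish there; a $C^{m+1}$-small perturbation of the metric, via Uhlenbeck's genericity theorem, makes $u_1$ Morse while preserving this nodal component; and then Morse theory for manifolds with boundary, applied to the nodal domain bounded by the perturbed copy of $\Si$, forces at least $\sum_i b_i(\overline D)\geq N$ non-degenerate critical points. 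Note that these critical points lie deep inside the collapsed region, where one has no pointwise control of $u_1$ at all --- which is exactly why the existence of many critical points must be extracted by a topological mechanism rather than by a $C^2$-approximation argument of the kind you propose.
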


An analog of this result for the first $l$ nontrivial eigenfunctions is given in
Proposition~\ref{P.critical}. It is worth recalling that, on surfaces, Cheng~\cite{Cheng} gave a
topological bound for the number of critical points of the
$k^{\mathrm{th}}$ eigenfunction that lie on the nodal line.

The ideas of the proof of the main theorem remain valid in the case of
manifolds with boundary. Specifically, let now $(M,g)$ be a compact Riemannian $n$-manifold with
boundary and consider the sequence of its Dirichlet eigenfunctions,
which with some abuse of notation we still denote by $u_k$ and satisfy
the equation
\[
\De u_k=-\la_k u_k\quad \text{in } M\,,\qquad u_k|_{\pd M}=0\,.
\]
Now $0<\la_1<\la_2\leq \dots$ are the Dirichlet eigenvalues of $M$. The problem under consideration is 
related to Payne's classical conjecture, which asserts that when $M$
is a bounded simply connected planar domain the nodal line of $u_2$ is
an arc connecting two distinct points of the boundary. Payne's
conjecture is known to hold for convex domains~\cite{Melas,Alessandrini}, but the
general case is still open. Yau~\cite[Problem~45]{Ya93} raised the question of the
validity of Payne's conjecture in $n$-manifolds with boundary. In this
direction, Freitas~\cite{Freitas} showed that there is a metric on the
two-dimensional ball for which Payne's conjecture does not hold, as
the nodal set is a closed curve contained in the interior of the
ball. The techniques used in this paper readily yield a powerful
higher-dimensional analog of this result. For simplicity and in view
of Payne's conjecture, we state it in the case of the $n$-ball,
although a totally analogous statement holds true in any compact
$n$-manifold with boundary (see Proposition~\ref{P.Payne}):

\begin{theorem}\label{T.Payne}
  Let $\Si$ be a closed orientable hypersurface contained in the
  $n$-dimen\-sional ball $\BB^n$. Then   there exists a Riemannian metric~$g$ on~$\overline{\BB^n}$ such that
  the nodal set of its second Dirichlet eigenfunction is $\Si$.
\end{theorem}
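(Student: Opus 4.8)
The plan is to imitate the proof of Theorem~\ref{T.main}, the only genuinely new feature being that one of the two pieces into which $\Si$ cuts $\BB^n$ now carries an extra boundary component on which the Dirichlet condition must be imposed. Since $\BB^n$ is simply connected and $H_{n-1}(\BB^n)=0$, Alexander duality (equivalently, the generalized Jordan--Brouwer separation theorem) shows that the connected orientable hypersurface $\Si$ separates $\BB^n$ into two open regions: an \emph{inner} one $\Om$, with $\pd\Om=\Si$ and $\overline\Om$ compactly contained in the interior of $\BB^n$, and an \emph{outer} one $\Om'$, with $\pd\Om'=\Si\sqcup\pd\BB^n$. I would fix a metric $h$ on $\Si$ and a constant $\la>0$ small enough (depending only on $h$ and on the collar lengths below), and build the metric on $\overline{\BB^n}$ from two halves.

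First I would invoke the construction underlying Theorem~\ref{T.main}, in its Dirichlet version, to endow $\overline\Om$ with a metric $g_1$ that equals the product $\dd t^2+h$ on a collar $\Si\times(-\de,0]$ of $\Si$, has first Dirichlet eigenvalue $\la$, and whose positive first Dirichlet eigenfunction $v_1$ depends on the normal variable only on that collar, so that necessarily $v_1=-\al\sin(\sqrt\la\,t)$ there with $\al>0$. Then I would run the same construction on $\overline{\Om'}$, now imposing Dirichlet data on \emph{both} boundary components; this is a cosmetic change and it produces a metric $g_2$ that is the \emph{same} product $\dd t^2+h$ on a collar $\Si\times[0,\de)$ of $\Si$, has first Dirichlet eigenvalue $\la$ as well, and whose positive first Dirichlet eigenfunction $v_2$ (which vanishes on $\pd\BB^n$) equals $\be\sin(\sqrt\la\,t)$ on that collar with $\be>0$. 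Because the collar metrics coincide near $\Si$, $g_1$ and $g_2$ patch into a smooth metric $g$ on $\overline{\BB^n}$; and setting $w:=-v_1$ on $\overline\Om$ and $w:=(\al/\be)\,v_2$ on $\overline{\Om'}$, both prescriptions agree on the whole collar $\Si\times(-\de,\de)$, where $w=\al\sin(\sqrt\la\,t)$. Consequently $w\in C^\infty(\overline{\BB^n})$, it solves $\De_g w=-\la w$ with $w|_{\pd\BB^n}=0$, it has exactly two nodal domains $\Om$ and $\Om'$, and its zero set equals $\Si$ provided $\sqrt\la\,\de<\pi$ (which prevents extra zeros in the collar).

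It then remains to see that $\la=\la_2$, so that $w$ is a second Dirichlet eigenfunction. Since $w$ changes sign, it is not a multiple of the positive first eigenfunction, so $\la\neq\la_1$ and hence $\la\geq\la_2$. For the reverse inequality, observe that $w^{+}:=\max(w,0)$ and $w^{-}:=\max(-w,0)$ lie in $H^1_0(\BB^n)$, have disjoint supports, and hence are orthogonal both in $L^2$ and in the Dirichlet form; moreover each has Rayleigh quotient exactly $\la$, being a constant multiple of the first Dirichlet eigenfunction of $\Om'$, resp.\ $\Om$, with that eigenvalue. Therefore every nonzero element of $V:=\Span\{w^{+},w^{-}\}$ has Rayleigh quotient $\la$, and the min--max characterization of $\la_2$ gives $\la_2\leq\la$. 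Hence $\la=\la_2$ and the nodal set of $w$ is $\Si$. (If one insists on \emph{the} second eigenfunction one may arrange in addition that $\la_2$ is simple, or simply invoke the $C^{2}$-stability of the construction mentioned in the Introduction.)

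The real difficulty, which I expect to be the main obstacle, lies in the ingredient imported from Theorem~\ref{T.main}: producing, on a compact manifold with a distinguished boundary component $\Si$, a metric that is a product near $\Si$ and whose first Dirichlet eigenfunction depends only on the normal coordinate near $\Si$. It is precisely this property that makes the two halves glue to an honest smooth eigenfunction rather than to a quasimode. The mechanism I would use is to attach to a ``core'' a long product cylinder $\Si\times(-T,0]$ and to design the metric on the core so that the Cauchy data of the first eigenfunction along $\Si\times\{-T\}$ are constant: the modes of $\De_\Si$ above the ground state propagate along the cylinder with factors $e^{-\sqrt{\nu_1-\la}\,T}$ and so become negligible for large $T$, after which an implicit-function/fixed-point argument on the core metric kills them exactly, an additional parameter being spent to pin down $\la$. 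Once this lemma --- the core of Theorem~\ref{T.main} --- is in hand, everything above is routine, and the ball plays no distinguished role: the same argument yields Proposition~\ref{P.Payne} for an arbitrary compact manifold with boundary, with $\pd\BB^n$ replaced by the given boundary, provided $\Si$ cuts off a region with no boundary of its own.
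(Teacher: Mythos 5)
Your overall architecture is sound where it is standard: once you have a metric $g$ on $\overline{\BB^n}$ admitting a sign-changing eigenfunction $w$ with exactly two nodal domains, the argument with $w^{\pm}\in H^1_0$, disjoint supports and equal Rayleigh quotients correctly forces $\la=\la_2$. But the proof hinges entirely on the gluing lemma you defer to the last paragraph, and that lemma is neither proved by you nor available from the paper. For $w$ to be a genuine (weak, hence smooth) eigenfunction you need the Cauchy data of the two one-sided first eigenfunctions to match along $\Si$ up to a global constant, i.e.\ the two eigenvalues must coincide \emph{exactly} and the ratio of the normal derivatives $\pd_\nu v_1/\pd_\nu v_2$ must be \emph{constant} on $\Si$. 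This is a non-generic, infinite-codimension condition: killing the dependence of the Cauchy data on the $\Si$-variable means annihilating all transverse Fourier modes simultaneously. Attaching a long cylinder only makes those modes exponentially small; upgrading ``exponentially small'' to ``exactly zero'' by an implicit-function argument requires an infinite-dimensional family of core perturbations together with surjectivity of the linearized (metric)\,$\mapsto$\,(Cauchy data mod constants) map --- a genuine gluing problem in the style of geometric-analytic connected-sum constructions, not a routine fixed point. You also mischaracterize this lemma as ``the core of Theorem~\ref{T.main}'': the paper never produces an eigenfunction that is exactly a function of the normal coordinate near $\Si$, precisely because that exact statement is hard.

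The paper's route is designed to avoid this matching problem altogether. It fixes a product metric on a tubular neighborhood $\Om\cong(-1,1)\times\Si$ of $\Si$ (not on the two sides cut out by $\Si$), shrinks the metric by a factor $\ep$ on the complement, and shows via min--max that the low eigenfunctions converge in $C^m\loc(\Om)$ to the explicit Neumann eigenfunctions $\cos\bigl(k\pi(x+1)/2\bigr)$ of the tube; the nodal set is then only \emph{approximately} $\{0\}\times\Si$, and Thom's isotopy theorem supplies a diffeomorphism, close to the identity, by which one pulls back the metric to make the nodal set exactly $\Si$ (Courant's theorem excluding further components). In other words, the paper trades your exact transmission condition for a quasimode-plus-isotopy argument, which is why it closes. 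As written, your proposal has a genuine gap at its central step; it would become a correct (and genuinely different) proof only if the gluing lemma were established, and note that even then your method needs $\Si$ to bound a region disjoint from $\pd M$, whereas the paper's Proposition~\ref{P.Payne} handles non-separating $\Si$ as well.
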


\section{Proof of the main theorem}
\label{S.main}

In this section we will prove Theorem~\ref{T.leigen} below, a consequence of
which is Theorem~\ref{T.main}. The gist of the proof, which uses ideas
introduced by Colin de Verdi\`ere ~\cite{Colin} to prescribe the multiplicity of the
first nontrivial eigenvalue $\la_1$, is to choose the metric so that
the low-energy eigenvalues are simple and the corresponding
eigenfunctions are close, in a suitable sense, to functions whose
nodal set is known explicitly. 

From now on and until Subsection~\ref{SS.Payne}, $M$ will be a compact
manifold without boundary of dimension $n\geq3$.

\subsubsection*{Step 1: Definition of the metrics} Consider a small
neighborhood $\Om\subset M$ of the orientable hypersurface $\Si$, which we can
identify with $(-1,1)\times\Si$. Let us take a metric $g_0$ on $M$
whose restriction to $\Om$ is
\[
g_0|_\Om=dx^2+g_\Si\,,
\]
where $x$ is the natural coordinate in $(-1,1)$ and $g_\Si$ is a
Riemannian metric on~$\Si$. We can assume that the first nontrivial eigenvalue of
the Laplacian on $\Si$ defined by $g_\Si$ is larger than $l^2\pi^2/4$, where
$l$ is a positive integer.

It is then clear that the first $l+1$ Neumann eigenfunctions of the
domain~$\Om$ can be written as
\[
v_k=|\Si|^{-\frac12} \cos \frac{k\pi(x+1)}2\,,\qquad 0\leq k\leq l\,,
\]
where $|\Si|$ stands for the area of~$\Si$. Observe that, with this
normalization, 
\[
\int_\Om v_jv_k=\de_{jk}\,.
\]
Denoting by $\De_0$ the Laplacian corresponding to the metric $g_0$,
these eigenfunctions satisfy the equation
\[
\De_0 v_k=-\mu_kv_k\quad \text{in }\Om\,,\qquad \pd_\nu v_k|_{\pd \Om}=0
\]
with $\mu_k:=k^2\pi^2/4$.

For each $\ep>0$, let us define a piecewise smooth metric $g_\ep$ on~$M$ by
setting
\[
g_\ep:=\begin{cases}
g_0 & \text{in }\overline \Om\,,\\
\ep g_0 & \text{in }M\minus \Om\,.
\end{cases}
\]
To define the spectrum of this discontinuous metric
one resorts to the quadratic form
\begin{align*}
Q_\ep(\vp)&:=\int_M |d \vp|_\ep^2\, dV_\ep\\
&=\int_\Om |d \vp|^2+ \ep^{\frac n2-1}\int_\Omc |d \vp|^2
\end{align*}
together with the natural $L^2$ norm corresponding to the metric $g_\ep$:
\begin{align*}
\|\vp\|_\ep^2&:=\int_M \vp^2\, dV_\ep\\
&=\int_\Om \vp^2+ \ep^{\frac n2}\int_\Omc \vp^2
\end{align*}
Here the possibly disconnected set $\Omc:=M\minus\overline\Om$ stands for the interior of the complement of $\Om$,
the subscripts~$\ep$ refer to quantities computed with respect to the
metric $g_\ep$ and we are omitting the subscripts (and indeed the
measure in the integrals) when the quantities correspond to the
reference metric $g_0$. As is well known, the domain of the quadratic
form $Q_\ep$ can be taken to be the Sobolev space $H^1(M)$ (recall
that, $M$ being compact, this Sobolev space is independent of the
smooth metric one uses to define it).

By the min-max principle, the $k^{\mathrm{th}}$ eigenvalue
$\la_{k,\ep}$ of this quadratic form is
\begin{equation}\label{minmax}
\la_{k,\ep}=\inf_{W\in\cW_k} \max_{\vp\in W\minus\{0\}} q_\ep(\vp)\,,
\end{equation}
where $\cW_k$ stands for the set of $(k+1)$-dimensional linear subspaces
of $H^1(M)$ and
\begin{equation}\label{qep}
q_\ep(\vp):=\frac{Q_\ep(\vp)}{\|\vp\|_\ep^2}
\end{equation}
is the Rayleigh quotient. The $k^{\mathrm{th}}$ eigenfunction $u_{k,\ep}$ is
then a minimizer of the above variational problem for
$\la_{k,\ep}$, in the sense that any subspace that minimizes the variational
problem can be written as $\Span\{ u_{0,\ep},\dots, u_{k,\ep}\}$.

\subsubsection*{Step 2: $\la_{k,\ep}$ is almost upper bounded by the corresponding
  Neumann eigenvalue}

Let us now show that the $k^{\mathrm{th}}$ eigenvalue of the quadratic
form $Q_\ep$ is upper bounded by the Neumann eigenvalues of $\Om$ as
\begin{equation}\label{upper}
\limsup_{\ep\searrow0}\la_{k,\ep}\leq  \mu_k\,.
\end{equation}
To see this, consider the function $\psi_k\in H^1(M)$ given by
\[
\psi_k:= \begin{cases}
v_k &\text{in } \BOm\,,\\
\hat v_k &\text{in } \Omc\,,
\end{cases}
\]
where $\hat v_k$ is the harmonic extension to $\Omc$ of the Neumann
eigenfunction $v_k$, defined as the solution to
the boundary value problem
\[
\De_0\hat v_k=0\quad\text{in } \Omc\,, \qquad \hat v_k|_{\pd\Om}=v_k\,.
\]
Standard elliptic estimates show that the Sobolev norms of $\hat v_k$
are controlled in terms of those of $v_k|_{\pd\Om}$; in particular
\begin{equation}\label{boundhvk}
\int_\Omc |d\hat v_k|^2+\int_\Omc \hat v_k^2\leq C\|v_k\|_{H^{\frac12}(\pd\Om)}^2\leq C\| v_k\|_{H^1(\Om)}^2=C(\mu_k+1)\,,
\end{equation}
where $C$ is a constant independent of~$\ep$ and~$k$, and we have used
that the Neumann eigenfunctions are normalized. 

We are now ready to derive the upper bound for $\la_{k,\ep}$. We will
find it convenient to denote by $O(\ep^m)$ a quantity that is bounded
(possibly not uniformly in~$k$)  by $C\ep^m$.
From~\eqref{boundhvk} it stems that for any linear
combination $\vp,\vp'\in \Span\{\psi_1,\dots,\psi_k\}$ we have
\begin{align}
\int_M \vp\,\vp'\, dV_\ep&= \int_\Om \vp\,\vp'+ \ep^{\frac n2}\int_\Omc
\vp\, \vp'\notag\\
& =\int_\Om \vp\, \vp'+O(\ep^{\frac n2})\,
\|\vp\|_{H^1(\Om)}\|\vp'\|_{H^1(\Om)} \notag\\
& =\int_\Om \vp\, \vp'+O(\ep^{\frac n2})\, (1+\mu_k)
\|\vp\|_{L^2(\Om)}\|\vp'\|_{L^2(\Om)} \notag\\
&=\int_\Om \vp\, \vp'+O(\ep^{\frac n2})\,
\|\vp\|_{L^2(\Om)}\|\vp'\|_{L^2(\Om)}\,. \label{boundL2}
\end{align}
To pass to the last line we have simply used that, for any finite $k$,
we can absorbe the constant
$1+\mu_k$ in the $O(\ep^{\frac n2})$ term. An immediate consequence of
this inequality is that, for all $k\leq l$ and sufficiently small
$\ep$ (depending on $l$), the linear space
\[
\Span\{\psi_1,\dots,\psi_k\}
\]
is a $k$-dimensional subspace of $H^1(M)$.

A similar argument using~\eqref{boundhvk} allows us to estimate $Q_\ep(\vp)$, with $\vp\in
\Span\{\psi_1,\dots,\psi_k\}$, as
\begin{align}
Q_\ep(\vp)&=\int_\Om |d \vp|^2+ \ep^{\frac n2-1}\int_\Omc |d \vp|^2\notag\\
& =\int_\Om |d \vp|^2+O(\ep^{\frac n2-1})\, (1+\mu_k)
\|\vp\|_{L^2(\Om)}^2\notag\\
&=\int_\Om |d \vp|^2+O(\ep^{\frac n2-1})\,
\|\vp\|_{L^2(\Om)}^2\,. \label{boundH1}
\end{align}
In view of the min-max formulation~\eqref{minmax} and omitting the condition that $\vp\neq0$ for notational
simplicity, we then obtain from~\eqref{boundL2} and~\eqref{boundH1} that
\begin{align*}
\la_{k,\ep}&\leq \max_{\vp\in \Span\{\psi_1,\dots,\psi_k\}}
q_\ep(\vp)\\
&=\max_{\vp\in \Span\{\psi_1,\dots,\psi_k\}}\frac{\int_\Om |d \vp|^2+O(\ep^{\frac n2-1})\,
\|\vp\|_{L^2(\Om)}^2}{[1+O(\ep^{\frac n2})]\,
\|\vp\|_{L^2(\Om)}^2}\\
&=[1+O(\ep^{\frac n2})]\max_{\chi\in \Span\{v_1,\dots,v_k\}}
q_\Om(\chi)+O(\ep^{\frac n2-1})\\
&=[1+O(\ep^{\frac n2})]\, \mu_k +O(\ep^{\frac n2-1})\,,
\end{align*}
which proves~\eqref{upper}. Here 
\[
q_\Om(\chi):=\frac{\int_\Om|d\chi|^2}{\int_\Om \chi^2}
\]
is the Rayleigh quotient in $\Om$ and we recall that the bounds for the $O(\ep^m)$
terms are {\em not}\/  uniform in $k$.

\subsubsection*{Step 3: Convergence to the Neumann eigenfunctions}

Let us consider the linear space
\[
\cH_2:=\big\{\vp\in H^1(M): \vp|_\Omc \in H^1_0(\Omc)\,,\; \vp|_\Om=0\big\}\,.
\]
This is a closed subspace of $H^1(M)$, so it is standard that there is
another closed subspace $\cH_1$ of $H^1(M)$ such that
\begin{equation}\label{directsum}
H^1(M)=\cH_1\oplus \cH_2
\end{equation}
and
\begin{equation}\label{ortho}
\int_M g_0( \nabla \vp_1,\nabla \vp_2 ) =0
\end{equation}
for all $\vp_j\in \cH_j$. Here the gradient is computed using the
reference metric $g_0$. A short computation shows that in fact one has
\[
\cH_1:=\big\{\vp\in H^1(M): \De_0\vp=0\;\text{in }\Omc\big\}\,.
\]
We will denote by $\cP_j$ the projector associated with the subspace $\cH_j$.

Our goal now is to analyze the first $l$ eigenfunctions of
$Q_\ep$ for small $\ep$. Hence, let us fix some nonnegative
integer $k\leq l$. By the upper bound~\eqref{upper}, if $\ep$ is small
enough we have that
\[
\la_{k,\ep}< \mu_k+1\,,
\]
which implies that
\begin{equation}\label{lakep1}
\la_{k,\ep}=\inf_{W\in\cW_k'} \max_{\vp\in W\minus\{0\}} q_\ep(\vp)\,.
\end{equation}
The difference with~\eqref{minmax} is that now $\cW_k'$ is the set of
$(k+1)$-dimensional subspaces of~$H^1(M)$ such that 
\[
q_\ep(\vp)< \mu_k+1
\]
for all nonzero $\vp\in \cW_k'$.

A further simplification is the following. Let us use the direct sum
decomposition~\eqref{directsum} to write $\vp$
as
\[
\vp= \vp_1+ \vp_2\,,
\]
where we will henceforth use the notation $\vp_j:=\cP_j\vp$. The observation now is that if
\[
\|\vp_2\|_\ep^2 \geq c\|\vp\|_\ep^2
\]
for some $c>0$, then 
\begin{align*}
q_\ep(\vp)&\geq \frac{\ep^{\frac
    n2-1}\int_\Omc|d\vp_2|^2}{\|\vp\|_\ep^2}\\
&\geq \frac{c\ep^{\frac
    n2-1}\int_\Omc|d\vp_2|^2}{\|\vp_2\|_\ep^2}\\
&\geq \frac {c}\ep\frac{\int_\Omc|d\vp_2|^2}{\int_\Omc\vp_2^2}\\
& \geq \frac{c_0c}\ep\,,
\end{align*}
where the positive, $\ep$-independent constant $c_0$ is the first Dirichlet eigenvalue of $\Omc$ with the reference
metric $g_0$. Hence we easily infer that, if $\vp$ is in a subspace belonging
to $\cW_k'$ with $k\leq l$, then necessarily
\begin{equation}\label{vp2}
\|\vp_2\|_\ep^2 \leq O(\ep)\,\|\vp\|_\ep^2\,.
\end{equation}

By mimicking the proof of the estimate~\eqref{boundhvk}, we readily
find that if $q_\ep(\vp)\leq \mu_k+1$, one has that
\begin{equation}\label{vp1}
\int_\Omc\vp_1^2+ \int_\Omc |d\vp_1|^2\leq C \int_\Om\vp_1^2\,.
\end{equation}
We can now use the orthogonality relation~\eqref{ortho} to write, for
any nonzero $\vp\in W$ with $W\in \cW_k'$,
\begin{align*}
q_\ep(\vp)&=\frac{\int_M|d\vp_1|^2_\ep \, dV_\ep +
  \int_M|d\vp_2|^2_\ep \, dV_\ep}{\|\vp\|^2_\ep}\\
&=[1+O(\ep^{\frac12})]\frac{\int_M|d\vp_1|^2_\ep \, dV_\ep +
  \int_M|d\vp_2|^2_\ep \, dV_\ep}{\|\vp_1\|^2_\ep}\\
&\geq [1+O(\ep^{\frac12})]\frac{\int_M|d\vp_1|^2_\ep \, dV_\ep
}{\|\vp_1\|^2_\ep}\\
&= [1+O(\ep^{\frac12})]\frac{\int_\Om|d\vp_1|^2 + \ep^{\frac n2-1}\int_\Omc|d\vp_1|^2
}{\int_\Om\vp_1^2+ \ep^{\frac n2}\int_\Omc\vp_1^2}\\
&= [1+O(\ep^{\frac12})]\frac{\int_\Om|d\vp_1|^2 + O(\ep^{\frac n2-1})\int_\Om\vp_1^2
}{\int_\Om\vp_1^2}\\
&= [1+O(\ep^{\frac12})]\, q_\Om(\vp_1|_\Om)+O(\ep^{\frac n2-1})\,.
\end{align*}
Here we have used the inequalities~\eqref{vp2}-\eqref{vp1}, which in
particular imply 
\[
\|\vp\|_\ep=[1+O(\ep^{\frac12})]\|\vp_1\|_\ep\,.
\]

Therefore, by the min-max principle~\eqref{lakep1},
\begin{align*}
\la_{k,\ep}&\geq [1+O(\ep^{\frac12})]\inf_{W\in\cW_k'} \max_{\vp\in
  W\minus\{0\}} q_\Om(\vp_1|_\Om) +O(\ep^{\frac n2-1})\\
&\geq [1+O(\ep^{\frac12})]\, \mu_k+O(\ep^{\frac n2-1})\,.
\end{align*}
Together with the upper bound~\eqref{upper}, this shows that
\[
\lim_{\ep\searrow0}\la_{k,\ep}=\mu_k\,.
\]
Since the eigenvalues $\mu_k$ are simple for $k\leq l$, it is
well-known that this implies that there are nonzero constants $\be_k$
such that the restriction to $\Om$ of the
eigenfunctions $u_{k,\ep}$ converges in $H^1(\Om)$ to the Neumann
eigenfunctions $\be_kv_k$ for all
$k\leq l$, that is,
\begin{equation}\label{nueva}
\lim_{\ep\searrow0} \|u_{k,\ep}- \be_k v_k\|_{H^1(\Om)}=0\quad
\text{for }
k\leq l\,. 
\end{equation}
For simplicity of notation, we will redefine the normalization
constants of the eigenfunctions $u_{k,\ep}$ if necessary to take $\be_k=1$.
In view of~\eqref{nueva}, standard elliptic estimates then ensure that $u_{k,\ep}\to v_k$ as
$\ep\to0$ in $C^m(K)$, for any compact subset $K$ of $\Om$ and any integer
$m$.

\subsubsection*{Step 4: Characterization of the nodal sets}

Let us fix a small enough $\ep>0$ and take a sequence of uniformly
bounded smooth functions $\chi_{j,\ep}\in C^\infty(M)$ converging pointwise to:
\[
\lim_{j\to\infty} \chi_{j,\ep}(x)=\begin{cases}
1 &\text{if } x\in\BOm\,,\\
\ep &\text{if } x\in\Omc\,.
\end{cases}
\]
It is then well-known (see e.g.~\cite{BU83}) that
the $k^{\mathrm{th}}$ eigenvalue of the Laplacian corresponding to the
metric $g_{j,\ep}:=\chi_{j,\ep}g_0$ then converges to $\la_{k,\ep}$ as
$j\to\infty$, and that its $k^{\mathrm{th}}$ eigenfunction
\[
u_{k,\ep}^j
\]
converges
in $H^1(M)$ (and in $C^m(K)$ for any compact subset of $M\backslash\partial\Omega$ and any
$m$) to $u_{k,\ep}$, where we are assuming that $k\leq l$ to ensure
that the eigenvalues are simple. The convergence result proved at the
end of Step~3 then yields that, for $\ep$ small enough, $j$ large and
any fix compact set $K\subset \Om$, the difference 
\[
\|u^j_{k,\ep}- v_k\|_{C^m(K)}
\]
can be made as small as one wishes for $0\leq k\leq l$.

It is clear that the nodal set of the Neumann
eigenfunction $v_k$ is empty for $k=0$ and diffeomorphic to $k$ copies
of $\Si$ for $1\leq k\leq l$, namely,
\begin{equation}\label{copies}
v_k^{-1}(0)=\bigg\{ \frac{(1+2i-k)}{k}: i\in [0,k-1]\cap \ZZ\bigg\} \times\Si\,.
\end{equation}
Here we are identifying $\Om$ with $(-1,1)\times\Si$ as
before. Since $dv_k$ does not vanish on the nodal set and we have
shown that $u_{k,\ep}^j$ converges to $v_k$ in the $C^m$ norm on compact
subsets of $\Om$, Thom's isotopy theorem~\cite[Section 20.2]{AR}
implies that for small enough $\ep$, large $j$ and $k\leq l$, the nodal set $(u_{k,\ep}^j)^{-1}(0)$ has at least $k$
connected components, which are diffeomorphic to $\Si$. More
precisely, there is a diffeomorphism $\Phi^j_{k,\ep}$ of $M$, arbitrarily close
to the identity in any fixed $C^m(M)$ norm, such that $\Phi^j_{k,\ep}(v_k^{-1}(0))$ is a
collection of connected components of the nodal set
$(u_{k,\ep}^j)^{-1}(0)$. If $\Si$ separates, Courant's nodal domain
theorem then guarantees that the nodal set cannot have any further
components. 

For large enough $j$ and small $\ep$, let us consider the pulled-back
metric
\[
g:=(\Phi^j_{1,\ep})^*g_{j,\ep}
\]
and call $u_k$ its $k^{\mathrm{th}}$ eigenfunction, which is given by
\[
u_k=u^j_{k,\ep}\circ \Phi^j_{1,\ep}\,.
\]
The nodal set of $u_k$ has then a connected component given by $\Si$
if $k=1$ and a collection of components given by $\Phi_k(v_k^{-1}(0))$ if $2\leq k\leq l$, with
\begin{equation}\label{Phik}
\Phi_k:=(\Phi^j_{1,\ep})^{-1}\circ \Phi^j_{k,\ep}
\end{equation}
a diffeomorphism that can be taken as close to the identity in any fixed
$C^m$ norm as one wishes. We have therefore proved the following

\begin{theorem}\label{T.leigen}
If $\ep$ is small enough and $j$ is large enough, for $k\leq l$ the nodal sets of
the eigenfunctions $u_k$ corresponding to the metric $g$ contain a collection of connected components
diffeomorphic to the set~\eqref{copies}, which is given by $k$ copies
of $\Si$. The diffeomorphism, which is given by~\eqref{Phik}, can be taken arbitrarily close to the
identity in the $C^m(M)$ norm, and in fact is exactly the identity for
$k=1$. Furthermore, if $\Si$ separates, the
nodal set of $u_k$ does not have any additional components.
\end{theorem}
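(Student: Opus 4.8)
The statement is essentially a summary of what was established in Steps~1--4, so the plan is to assemble those ingredients while being scrupulous about the order in which the two parameters $\ep$ and $j$ are sent to their limits. First I would fix the integer $l$ once and for all. Every estimate in Steps~2--3 carries error terms $O(\ep^m)$ whose implicit constants depend on $l$ (indeed on the individual $k$) but never on $\ep$; hence there is an $\ep_0=\ep_0(l)>0$ such that for every $\ep<\ep_0$ and every $k\leq l$ one has $\la_{k,\ep}<\mu_k+1$, the lower bound of Step~3 applies, and $\la_{k,\ep}\to\mu_k$ as $\ep\searrow0$. Since $\mu_0,\dots,\mu_l$ are simple, the standard perturbation argument recorded in~\eqref{nueva} gives, after renormalizing the eigenfunctions, $u_{k,\ep}\to v_k$ in $H^1(\Om)$, which elliptic regularity upgrades to convergence in $C^m(K)$ for every compact $K\subset\Om$ and every $m$.

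The second move is to pass from the discontinuous metric $g_\ep$ to the genuine smooth metrics $g_{j,\ep}=\chi_{j,\ep}g_0$. By the approximation result of~\cite{BU83}, for each fixed $\ep<\ep_0$ and each $k\leq l$ one has convergence of eigenvalues and eigenfunctions, $u^j_{k,\ep}\to u_{k,\ep}$ in $H^1(M)$ and in $C^m(K)$ on compact sets away from $\pd\Om$, as $j\to\infty$. Composing the two limits in the correct order --- freeze $l$, then choose $\ep<\ep_0$, then take $j$ large --- I obtain, for any prescribed tolerance and any fixed compact $K\subset\Om$ containing the set~\eqref{copies}, that $u^j_{k,\ep}$ is as $C^m(K)$-close to $v_k$ as one wishes, simultaneously for all $0\leq k\leq l$.

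With this quantitative $C^m$-closeness in hand, the topology is handed over to Thom's isotopy theorem~\cite[Section 20.2]{AR}: since $dv_k$ is nowhere zero on $v_k^{-1}(0)$, for $\ep$ small and $j$ large there is a diffeomorphism $\Phi^j_{k,\ep}$ of $M$, arbitrarily $C^m$-close to the identity, carrying $v_k^{-1}(0)$ onto a union of connected components of $(u^j_{k,\ep})^{-1}(0)$, each diffeomorphic to $\Si$; this realizes the $k$ parallel copies described by~\eqref{copies}. Pulling back by $\Phi^j_{1,\ep}$ and setting $g=(\Phi^j_{1,\ep})^*g_{j,\ep}$, the $k=1$ component becomes literally $\Si$, while for $2\leq k\leq l$ the corresponding components are the images of~\eqref{copies} under the diffeomorphism $\Phi_k=(\Phi^j_{1,\ep})^{-1}\circ\Phi^j_{k,\ep}$ of~\eqref{Phik}, still $C^m$-close to the identity. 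Finally, if $\Si$ separates $M$ then so do the $k$ disjoint hypersurfaces making up $\Phi_k(v_k^{-1}(0))$, which therefore cut $M$ into at least $k+1$ regions; since Courant's nodal domain theorem allows $u_k$ at most $k+1$ nodal domains, there is no room for any further component, so the nodal set coincides with $\Phi_k(v_k^{-1}(0))$.

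I expect no serious obstacle here, since the genuine work has already been done in Steps~2--4; the proof is really a matter of collecting them. The one point that must be respected with care is the hierarchy of quantifiers: the constants in the $O(\ep^m)$ estimates degrade as $k$ grows, so $l$ has to be fixed before $\ep_0$ is chosen, and the smoothing limit $j\to\infty$ may only be taken once $\ep$ has been frozen --- conflating these two limits would break the argument. Everything else --- the min-max bounds, the simplicity-driven eigenfunction convergence, the smoothing lemma of~\cite{BU83}, Thom's isotopy theorem, and Courant's theorem --- can be quoted off the shelf.
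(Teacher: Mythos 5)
Your proposal is correct and follows essentially the same route as the paper: Steps~2--3 give convergence $u_{k,\ep}\to v_k$ in $C^m$ on compact subsets of $\Om$, the smoothing lemma of~\cite{BU83} handles the passage to $g_{j,\ep}$, Thom's isotopy theorem produces the diffeomorphisms $\Phi^j_{k,\ep}$, the pull-back by $\Phi^j_{1,\ep}$ fixes the $k=1$ component to be exactly $\Si$, and Courant's theorem rules out extra components in the separating case. Your explicit attention to the order of quantifiers ($l$ before $\ep_0$ before $j$) matches the paper's implicit logic and is a welcome clarification rather than a deviation.
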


\section{Applications}
\label{S.applications}

The strategy of the proof of Theorem~\ref{T.leigen} is quite versatile
and can be employed to construct eigenfunctions with prescribed
behavior in different situations. We shall now present three concrete
applications of this philosophy, where we will consider eigenfunctions
with prescribed nodal sets of different topologies, low-energy
eigenfunctions with a large number of non-degenerate critical points
and nodal sets of Dirichlet eigenfunctions in manifolds with boundary.

\subsection{Eigenfunctions with nodal sets of different topologies}

In Theorem~\ref{T.leigen} we showed how to construct metrics whose
$k^{\mathrm{th}}$ eigenfunction has a nodal set diffeomorphic to $k$
copies of a given hypersurface $\Si$ (and possibly other connected
components, if $\Si$ does not separate). To complement this result, in
the following proposition we shall show how one can use the same
argument to construct a metric where the nodal set of the
$k^{\mathrm{th}}$ eigenfunction has a prescribed
connected component $\Si_k$, possibly not diffeomorphic for distinct
values of~$k$.

\begin{proposition}\label{P.different}
Let $\Si_1,\dots,\Si_l$ be pairwise disjoint closed orientable
hypersurfaces of $M$. Then for all $1\leq k\leq l$, there
exists a Riemannian metric $g$ on~$M$ such that $\Si_k$ is a
connected component of the nodal set $u_k^{-1}(0)$.
\end{proposition}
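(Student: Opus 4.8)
The plan is to adapt the four-step construction of Theorem~\ref{T.leigen} almost verbatim, the only modification being in Step~1, where we allocate one ``slab'' of collar neighborhood to each hypersurface $\Si_k$ in turn. Concretely, since the $\Si_k$ are pairwise disjoint and closed, we may choose pairwise disjoint tubular neighborhoods $\Om_k\cong(-1,1)\times\Si_k$ of each $\Si_k$, and on each $\Om_k$ take the product metric $dx_k^2+g_{\Si_k}$, where $g_{\Si_k}$ is a metric on $\Si_k$ whose first nontrivial Laplace eigenvalue exceeds $l^2\pi^2/4$. Set $\Om:=\bigsqcup_{k=1}^l\Om_k$ and define $g_\ep$ to be $g_0$ on $\overline\Om$ and $\ep g_0$ on $M\minus\Om$, exactly as before. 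The decisive point is the choice of the one-dimensional Neumann eigenfunction on each slab: on $\Om_k$ we use $v_k=|\Si_k|^{-1/2}\cos\frac{k\pi(x_k+1)}{2}$, so that its Neumann eigenvalue on $\Om_k$ is $\mu_k=k^2\pi^2/4$, and we extend $v_k$ by $0$ on the other slabs $\Om_{k'}$, $k'\neq k$. Then $v_0\equiv\text{const}$ (the constant function on $\Om$), $v_1,\dots,v_l$ are $L^2(\Om)$-orthonormal and are precisely the first $l+1$ Neumann eigenfunctions of the (now disconnected) domain $\Om$, with eigenvalues $0=\mu_0<\mu_1<\dots<\mu_l$; note these are simple exactly because we put the $k$th ``frequency'' on the $k$th slab, so no two slabs share an eigenvalue among the first $l$.

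With this setup the remaining steps go through unchanged. Step~2 gives $\limsup_{\ep\searrow0}\la_{k,\ep}\leq\mu_k$ by testing with $\psi_k$ equal to $v_k$ on $\overline\Om$ and to the $g_0$-harmonic extension $\hat v_k$ on $\Omc$, using the elliptic estimate~\eqref{boundhvk} (the fact that $\Om$ and $\Omc$ are disconnected is immaterial: one extends harmonically on each component). Step~3 gives the matching lower bound $\liminf_{\ep\searrow0}\la_{k,\ep}\geq\mu_k$ via the splitting $H^1(M)=\cH_1\oplus\cH_2$ and the Dirichlet-eigenvalue bound on $\Omc$, yielding $\lim_{\ep\searrow0}\la_{k,\ep}=\mu_k$; since $\mu_0,\dots,\mu_l$ are simple, $u_{k,\ep}|_\Om\to\be_k v_k$ in $H^1(\Om)$ and, by elliptic regularity, in $C^m$ on compact subsets of $\Om$. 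Step~4 then smooths $g_\ep$ to an honest metric $g_{j,\ep}=\chi_{j,\ep}g_0$, so that $u^j_{k,\ep}\to v_k$ in $C^m$ on compacts of $\Om$; since $dv_k$ is nonvanishing on $v_k^{-1}(0)=\bigl(\{\frac{1+2i-k}{k}:0\le i\le k-1\}\times\Si_k\bigr)$, which lies in a single slab $\Om_k$, Thom's isotopy theorem produces a diffeomorphism $\Phi^j_{k,\ep}$ close to the identity carrying a copy of $\Si_k$ onto a connected component of $(u^j_{k,\ep})^{-1}(0)$. Finally pull back by $\Phi^j_{1,\ep}$ to make the component for $k=1$ exactly $\Si_1$; after relabelling this is the desired metric $g$, with $\Si_k$ (more precisely $\Phi_k(\{0\}\times\Si_k)$ for $k\geq2$, and $\Si_1$ itself for $k=1$) a connected component of $u_k^{-1}(0)$.

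One small wrinkle is worth flagging, and it is really the only place where the statement differs from Theorem~\ref{T.leigen}: here $\Si_k$ need not separate $M$, so we cannot invoke Courant's theorem to conclude the nodal set of $u_k$ is \emph{exactly} a copy of $\Si_k$, and indeed the proposition only claims $\Si_k$ is a \emph{connected component}. The statement as written also asserts only that, for each fixed $k$, such a metric exists; but in fact the construction produces, for $\ep$ small and $j$ large, a \emph{single} metric $g$ that works simultaneously for all $k=1,\dots,l$, so one could state the stronger simultaneous version if desired. The main obstacle, if any, is purely bookkeeping: one must check that the diffeomorphisms $\Phi^j_{k,\ep}$ (and in particular $\Phi^j_{1,\ep}$, which is used to renormalize the metric) can all be taken simultaneously close to the identity so that applying $(\Phi^j_{1,\ep})^*$ does not disturb the nodal components coming from the other slabs — but this is immediate since each $\Phi^j_{k,\ep}$ is supported near $\Om_k$ and can be made $C^m$-small, exactly as in the proof of Theorem~\ref{T.leigen}.
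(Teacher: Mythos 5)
Your construction diverges from the paper's at the one point where the design actually matters, and the divergence is fatal. You give all the slabs the same width and place the $k$-th longitudinal Neumann mode on the $k$-th slab, asserting that $v_0,\dots,v_l$ are then ``precisely the first $l+1$ Neumann eigenfunctions'' of the disconnected domain $\Om=\bigsqcup_k\Om_k$, with simple eigenvalues $k^2\pi^2/4$. This is false: the Neumann spectrum of a disjoint union is the union, counted with multiplicity, of the spectra of the pieces, and \emph{every} slab $(-1,1)\times\Si_k$ with the product metric carries \emph{all} the longitudinal eigenvalues $m^2\pi^2/4$, $m=0,1,2,\dots$ (only the transverse modes are pushed high by the hypothesis on $g_{\Si_k}$). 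So $0$ occurs with multiplicity $l$, $\pi^2/4$ occurs with multiplicity $l$, and so on; in the correct ordering the $k$-th Neumann eigenvalue of $\Om$ is nowhere near $k^2\pi^2/4$. The min--max lower bound of Step~3 can only produce the $k$-th Neumann eigenvalue of $\Om$ \emph{in increasing order}, so your claimed ``matching lower bound'' $\liminf_{\ep\searrow0}\la_{k,\ep}\geq k^2\pi^2/4$ does not follow; the limiting eigenvalues are not simple; and $u_{k,\ep}$ has no reason to converge to your $v_k$ rather than to some element of an $l$-dimensional cluster. The paper makes exactly the opposite choice: it puts the \emph{first} nontrivial longitudinal mode on every slab but gives the slabs different widths $\Ga_k$, with $1=\Ga_1>\dots>\Ga_l>\tfrac12$. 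Then the candidate eigenvalues $\mu_k=\pi^2/(4\Ga_k^2)$ are pairwise distinct, while every other nonconstant mode on every slab has eigenvalue at least $\pi^2/\Ga_k^2\geq\pi^2>\mu_l$, so the $\mu_k$ are genuinely the bottom of the nonzero Neumann spectrum of $\Om$ and are simple there. This choice also fixes a second defect of your version: your $v_k$ has $k$ parallel copies of $\Si_k$ as nodal set and, for $k$ even, none of them is $\{0\}\times\Si_k$, so even after the pullback you obtain only a hypersurface diffeomorphic to $\Si_k$, whereas the paper's $v_k$ has the single nodal component $\{0\}\times\Si_k=\Si_k$, and the pairwise disjointness of the supports of the $\Phi^j_{k,\ep}$ lets one assemble a single global diffeomorphism straightening all $l$ components at once.

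One further point deserves attention in any version of this argument with disconnected $\Om$ (the paper's own write-up is very terse here as well): functions that are locally constant on $\Om$, with different constants on different slabs and interpolated across $\Omc$, have Rayleigh quotient $O(\ep^{\frac n2-1})\to0$, so they supply an additional $(l-1)$-dimensional family of test functions with asymptotically vanishing energy, on top of the global constant. A careful execution of Steps~2 and~3 must account for these low-lying modes (they affect the indexing of the eigenvalues one is tracking), and your blanket assertion that ``the remaining steps go through unchanged'' does not engage with this. In short: the proposal does not establish the proposition as written; the essential missing idea is the paper's device of separating the slab eigenvalues by varying the widths $\Ga_k$ rather than the mode numbers.
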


\begin{proof}
The proof is an easy modification of that of the main theorem. Indeed,
let us consider (pairwise disjoint) small neighborhoods $\Om_k$ of the orientable
hypersurfaces $\Si_k$. As before, we will identify $\Om_k$ with
$(-1,1)\times\Si_k$. The starting point now is a smooth reference
metric $g_0$ on $M$ taking the form
\[
g_0|_{\Om_k}=\Ga_k^2\, dx^2+g_{\Si_k}\,,
\]
in each domain $\Om_k$. Here $g_{\Si_k}$ is a metric on $\Si_k$ whose
first eigenvalue is assumed larger than $\pi^2$ and $\Ga_k$ are 
constants such that
\[
1=\Ga_1>\Ga_2>\cdots>\Ga_l>\frac12\,.
\]
Hence the first nonzero Neumann eigenvalue of $\Om_k$ is 
\[
\mu_k:=\frac{\pi^2}{4\Ga_k^2}\,,
\]
and the corresponding normalized eigenfunction is
\[
v_k:=\frac1{\sqrt{\Ga_k|\Si_k|}}\, \cos \frac{\pi(x+1)}2\,.
\]
Notice that here $v_k$ and
$\mu_k$ do {\em not}\/ have the same meaning as in
Section~\ref{S.main}, even though they will play totally analogous
roles in the proof.

We now set $\Om:=\bigcup_{k=1}^l \Om_k$ and consider the piecewise
smooth metric
\[
g_\ep:=\begin{cases}
g_0 & \text{in }\overline \Om\,,\\
\ep g_0 & \text{in }M\minus \Om\,.
\end{cases}
\]
Arguing exactly as in Steps~2 and~3 of the proof of the main theorem
with the new definitions of $\mu_k$ and $v_k$,
one concludes that the $k^{\mathrm{th}}$ eigenvalue $\la_{k,\ep}$ of
  the quadratic form defined by the metric $g_\ep$ tends to~$\mu_k$ as $\ep\searrow0$
  for all $1\leq k\leq l$, and that the corresponding eigenfunction
  $u_{k,\ep}$ converges to $v_k$ in $C^m(K)$, for $K$ any compact
  subset of $\Om_k$. 

  As in Step~4 of Section~\ref{S.main}, we can smooth the metric
  $g_\ep$ by introducing a sequence of smooth metrics
  $g_{j,\ep}$. Arguing as in Step~4, we infer that for large $j$ and
  small $\ep$ the nodal set of the eigenfunction $u^j_{k,\ep}$ then
  has a connected component diffeomorphic to $\Si_k$ for $1\leq k\leq
  l$, with the corresponding diffeomorphism $\Phi^j_{k,\ep}$ being
  arbitrarily close to the identity in any fixed $C^m$ norm. The point now is
  that, as the hypersurfaces $\Si_k$ are pairwise disjoint, the
  diffeomorphisms $\Phi^j_{k,\ep}$ can be assumed to differ from the
  identity only in a small neighborhood $S_k$ of each
  hypersurface. This allows us to define a diffeomorphism $\Phi^j_\ep$ of
  $M$ by setting $\Phi^j_\ep:=\Phi^j_{k,\ep}$ in each $S_k$ and letting
  $\Phi^j_\ep$ be the identity outside these sets.

The statement then follows by taking $g:=(\Phi^j_\ep)^*g_{j,\ep}$,
with large $j$ and small $\ep$, since in this case the eigenfunctions
are $u_k=u^j_{k,\ep}\circ \Phi^j_\ep$.
\end{proof}

\subsection{Critical points of low-energy eigenfunctions}

The study of level sets of a function is deeply connected with that of its critical points. Hence here we will use Morse theory and our construction
of eigenfunctions with prescribed nodal sets to show that there are
metrics whose low-energy eigenfunctions have an arbitrarily high
number of critical points. Notice that Theorem~\ref{T.cp} is a
corollary of Proposition~\ref{P.critical} below. Regarding the behavior of the critical
points of high-energy eigenfunctions, let us recall that
Jakobson and Nadirashvili~\cite{JN} proved that there are metrics
(even on the two-dimensional torus) for which the number of critical
points of the $k^{\mathrm{th}}$ eigenfunction does not tend to
infinity as $k\to\infty$.

\begin{proposition}\label{P.critical}
Given any positive integers $N$ and $l$, there is a smooth metric on~$M$ such that the $k^{\text{th}}$~eigenfunction $u_k$ has at least $N$
non-degenerate critical points for all $1\leq k\leq l$.
\end{proposition}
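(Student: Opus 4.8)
The plan is to mimic the strategy of Theorem~\ref{T.leigen}, but to replace each ``slab'' neighborhood $(-1,1)\times\Si_k$ by a model region on which the first nontrivial Neumann eigenfunction has a prescribed number of non-degenerate critical points. First I would choose a compact $n$-manifold $P$ with boundary (for instance a handlebody or a suitably chosen domain in $\RR^n$, or a perturbation of a product) together with a metric $g_P$ for which the first nontrivial Neumann eigenfunction $w$ is simple and has at least $N$ non-degenerate critical points. The cleanest way to arrange this: take $P$ to be a thin tubular region around a graph (a ``spider'' or a long thin cylinder with $N$ beads), so that, after rescaling the transverse directions, the first Neumann eigenfunction is close to the first eigenfunction of a one-dimensional Sturm--Liouville problem on a metric graph, whose derivative has the prescribed number of simple zeros; alternatively one invokes the genericity of Morse eigenfunctions (Uhlenbeck) to perturb so that the critical points are non-degenerate while keeping $\la_1$ simple. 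Either way, one obtains a model $(P,g_P)$ and an explicit open set where $w$ has $\ge N$ non-degenerate critical points and $dw\neq0$ off a compact set.

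Next I would embed $l$ disjoint copies $P_1,\dots,P_l$ of this model (possibly with different constants, exactly as the $\Ga_k$ in Proposition~\ref{P.different}, so that the first Neumann eigenvalues $\mu_1<\mu_2<\cdots<\mu_l$ are strictly ordered) into $M$ as disjoint regions $\Om_k$, and build the reference metric $g_0$ so that $g_0|_{\Om_k}$ equals the model metric (rescaled). Then I set $g_\ep=g_0$ on $\overline{\Om}$ and $\ep g_0$ on $M\minus\Om$, with $\Om=\bigcup_k\Om_k$, and run Steps~2 and~3 of Section~\ref{S.main} verbatim: the only inputs used there are that the $\mu_k$ are simple, that the Neumann eigenfunctions on each piece are normalized, and that $\Omc$ has a positive first Dirichlet eigenvalue. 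The conclusion is that $\la_{k,\ep}\to\mu_k$ and $u_{k,\ep}\to w_k$ (the $k$-th model eigenfunction) in $C^m$ on compact subsets of $\Om_k$ for $k\leq l$. Smoothing the metric as in Step~4 gives smooth metrics $g_{j,\ep}$ whose $k$-th eigenfunction $u^j_{k,\ep}$ is $C^m$-close on a fixed compact $K\subset\Om_k$ to $w_k$, which has $\ge N$ non-degenerate critical points.

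The final step is to transfer the critical points from $w_k$ to $u^j_{k,\ep}$. Since the critical points of $w_k$ are non-degenerate, $\grad w_k$ vanishes transversally, so by the implicit function theorem (or a quantitative inverse function theorem / degree argument) any function $C^2$-close to $w_k$ on $K$ has, near each critical point of $w_k$, exactly one critical point, which is again non-degenerate. Hence $u^j_{k,\ep}$ has at least $N$ non-degenerate critical points for all $1\leq k\leq l$, for $\ep$ small and $j$ large. As in Proposition~\ref{P.different}, because the $\Om_k$ are disjoint the diffeomorphisms needed in Step~4 are supported in disjoint neighborhoods and can be glued to a single $\Phi^j_\ep$; pulling back, $g:=(\Phi^j_\ep)^*g_{j,\ep}$ is the desired smooth metric.

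I expect the main obstacle to be the first step: producing a clean model $(P,g_P)$ whose first nontrivial Neumann eigenfunction is simple and provably has $\ge N$ \emph{non-degenerate} critical points. The thin-tube/Sturm--Liouville heuristic is intuitively clear but requires a careful singular-perturbation argument (quantum-graph limit) to make rigorous, and one must simultaneously control simplicity of $\mu_1$ and non-degeneracy of the critical points under the limit. Invoking generic-Morse perturbation theory for eigenfunctions sidesteps the explicit construction but then one must check that the perturbation can be localized so that it does not interfere with the slab structure used in Steps~2--4; once a valid model is in hand, the rest of the argument is a routine repetition of Section~\ref{S.main} and Proposition~\ref{P.different}.
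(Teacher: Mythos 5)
Your overall architecture (disjoint model regions, the singular limit $g_\ep$, smoothing, and stability of non-degenerate critical points under $C^2$-small perturbations) is sound, but the proof has a genuine gap exactly where you locate it: the construction of a model $(P,g_P)$ whose \emph{first} nontrivial Neumann eigenfunction provably has at least $N$ non-degenerate critical points. Neither of your two proposed fixes closes it. Uhlenbeck's genericity theorem only makes whatever critical points exist non-degenerate; it cannot manufacture $N$ of them, so it gives no lower bound. The thin-tube/quantum-graph limit is worse than delicate: the limiting eigenfunction is constant in the transverse directions, so its ``critical points'' are in fact critical submanifolds (entire transverse slices), which are degenerate. Your transfer step via the implicit function theorem requires non-degeneracy of the limit's critical points, so it does not apply, and deciding whether the true eigenfunction on the thin tube has any critical points near such a degenerate critical slice --- let alone $N$ non-degenerate ones --- requires a higher-order singular-perturbation analysis that you have not carried out. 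As written, the argument establishes nothing beyond what Theorem~\ref{T.leigen} already gives.

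The paper's proof avoids constructing critical points explicitly and instead forces their existence topologically. One takes $\Si=\pd D$ for a domain $D$ inside a ball, with $\pd D$ a connected sum of $N$ copies of $\SS^1\times\SS^{n-2}$ so that the Betti numbers of $\overline D$ sum to at least $N$. Theorem~\ref{T.leigen} provides a metric whose $k^{\mathrm{th}}$ eigenfunction has a nodal component bounding a copy of $D$, with nonvanishing gradient there; a $C^{m+1}$-small Uhlenbeck perturbation makes the eigenfunctions Morse while preserving this nodal component. Since the gradient is a nonvanishing normal field on the boundary of the perturbed domain $D_k$, Morse theory for manifolds with boundary bounds the number of index-$i$ critical points below by the $i^{\mathrm{th}}$ Betti number of $\overline{D_k}$. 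This is the key idea missing from your proposal: the lower bound on the number of critical points comes from the Morse inequalities applied to a nodal domain of prescribed topology, not from an explicit model eigenfunction. If you want to salvage your route, you would need to replace your model-construction step by such a topological mechanism (or genuinely solve the thin-tube perturbation problem), at which point you would essentially be reproducing the paper's argument.
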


\begin{proof}
Let us fix some ball $B\subset M$ and take a domain $D$ whose closure
is contained in $B$. This ensures that $\Si:=\pd D$ separates. Theorem~\ref{T.leigen} shows that there is a
smooth metric $g$ such that the nodal set of its $k^{\mathrm{th}}$
eigenfunction $u_k$ is diffeomorphic to $k$ copies of
$\Si$. Furthermore, the corresponding eigenvalues $\la_k$ are simple
for $0\leq k\leq l$ and in Step~4 of Section~\ref{S.main} we showed that the
differential of $u_k$ does not vanish on its nodal set.

A theorem of Uhlenbeck~\cite{Uh76} ensures that one can take a
$C^{m+1}$-small perturbation $\tilde g$ of the metric $g$ so that the
first $l$ eigenfunctions are Morse, that is, all their critical points
are non-degenerate. Standard results from perturbation theory show
that the eigenfunctions $\tilde u_k$ of the perturbed metric are close
in the $C^m(M)$ norm to $u_k$, so in particular the nodal set of
$\tilde u_k$ has a connected component $\tilde\Si_k\subset B$ diffeomorphic to $\Si$ for all
$1\leq k\leq l$. Here we are using the fact that the gradient of $u_k$
does not vanish on its nodal set and Thom's isotopy theorem.

Call $D_k$ the domain contained in $B$
that is bounded by $\tilde\Si_k$ and let us denote by~$\tilde\nabla$ the covariant derivative associated
with the metric $\tilde g$. Since $\tilde\nabla \tilde u_k$ is a
nonzero normal vector on $\pd D_k$, which can be assumed to point
outwards without loss of generality, we can resort to Morse theory for
manifolds with boundary to show that the number of critical points of
$\tilde u_k$ of Morse index $i$ is at least as large as the
$i^{\text{th}}$ Betti number of the domain $\overline{D_k}$, for $0\leq i\leq
n-1$. Since $\overline{D_k}$ is diffeomorphic to $\overline{D}$, the proposition then follows
by choosing the domain $\overline{D}$ so that the sum of its Betti numbers is at
least $N$ (this can be done, e.g., by taking $\pd D$ diffeomorphic
to a connected sum of $N$ copies of any nontrivial product of spheres,
such as $\SS^1\times\SS^{n-2}$, since in this case the first Betti
number is~$N$).
\end{proof}

\subsection{Nodal sets of Dirichlet eigenfunctions}\label{SS.Payne}

Motivated by Payne's problem, in this subsection we will consider the nodal set of
Dirichlet eigenfunctions on manifolds with boundary. Hence here $M$ will be a compact manifold with boundary of
dimension $n\geq 3$. Notice that
Theorem~\ref{T.Payne} is a corollary of Proposition~\ref{P.Payne}
below. 

\begin{proposition}\label{P.Payne}
  Let $M$ be a compact $n$-manifold with boundary and let $\Si$ be a
  closed orientable hypersurface contained in the interior of
  $M$. Then there exists a Riemannian
  metric $g$ on~$M$ such that $\Si$ is a connected component of the nodal set of its second Dirichlet
  eigenfunction. If $\Si$
separates, then the nodal set is exactly~$\Si$.
\end{proposition}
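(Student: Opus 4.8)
The plan is to run the proof of Theorem~\ref{T.leigen} with only the changes forced by the Dirichlet boundary condition. First I would fix a tubular neighborhood $\Om$ of $\Si$, identified with $(-1,1)\times\Si$, and a smooth reference metric $g_0$ on $M$ with $g_0|_\Om=dx^2+g_\Si$, where $(\Si,g_\Si)$ is chosen so that its first nonzero Laplace eigenvalue exceeds $\pi^2/4$. Since $\Si$ lies in the interior of $M$, the whole boundary $\pd M$ is contained in $\Omc:=M\minus\overline\Om$. The two lowest Neumann eigenvalues of $(\Om,g_0)$ are then $\mu_0=0$ and $\mu_1=\pi^2/4$, both simple and isolated in the Neumann spectrum, with eigenfunctions $v_0=|\Si|^{-\frac12}$ and $v_1=|\Si|^{-\frac12}\cos\frac{\pi(x+1)}{2}$; the nodal set of the latter is exactly $\{0\}\times\Si=\Si$. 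As in Step~1, I would put $g_\ep:=g_0$ on $\overline\Om$ and $g_\ep:=\ep g_0$ on $M\minus\Om$, but now regard the quadratic form $Q_\ep$ as acting on $H^1_0(M)$, so that its spectrum is the Dirichlet spectrum of $g_\ep$.

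Next I would verify that Steps~2 and~3 carry over essentially word for word. For the upper bound one uses the test functions equal to $v_k$ on $\overline\Om$ and to the harmonic extension $\hat v_k$ solving $\De_0\hat v_k=0$ in $\Omc$ with $\hat v_k=v_k$ on $\pd\Om$ and $\hat v_k=0$ on $\pd M$; these belong to $H^1_0(M)$ and satisfy the same elliptic estimate~\eqref{boundhvk}, now for a mixed boundary-value problem. For the lower bound one uses the splitting $H^1_0(M)=\cH_1\oplus\cH_2$ with $\cH_2=\{\vp\in H^1_0(M):\vp|_\Om=0\}$ and $\cH_1=\{\vp\in H^1_0(M):\De_0\vp=0\text{ in }\Omc\}$; the sole quantitative input specific to the boundaryless case was the positivity of the first Dirichlet eigenvalue $c_0$ of $\Omc$, which persists once $c_0$ is read as the first eigenvalue of $\Omc$ with Dirichlet data on $\pd\Om\cup\pd M$. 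The conclusion is that $\la_{1,\ep}\to\mu_0=0$ and $\la_{2,\ep}\to\mu_1=\pi^2/4$ as $\ep\searrow0$---the indices being shifted by one relative to Theorem~\ref{T.leigen} because the constant function is no longer admissible---and since $\mu_0$ and $\mu_1$ are simple and isolated, for small $\ep$ the eigenvalue $\la_{2,\ep}$ is simple and the normalized second Dirichlet eigenfunction $u_{2,\ep}$ converges in $H^1(\Om)$, hence in $C^m(K)$ for every compact $K\subset\Om$, to $v_1$ up to rescaling.

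Then I would rerun Step~4: smoothing $g_\ep$ by metrics $g_{j,\ep}=\chi_{j,\ep}g_0$, the second Dirichlet eigenfunction $u^j_{2,\ep}$ converges to $u_{2,\ep}$ and is therefore $C^m$-close on compact subsets of $\Om$ to $v_1$ once $\ep$ is small and $j$ large. As $dv_1$ does not vanish on $v_1^{-1}(0)=\Si$, Thom's isotopy theorem yields a diffeomorphism $\Phi^j_{2,\ep}$ of $M$, arbitrarily $C^m$-close to the identity, such that $\Phi^j_{2,\ep}(\Si)$ is a connected component of the nodal set of $u^j_{2,\ep}$. Setting $g:=(\Phi^j_{2,\ep})^*g_{j,\ep}$, the second Dirichlet eigenfunction of $g$ is $u_2=u^j_{2,\ep}\circ\Phi^j_{2,\ep}$, which has $\Si$ as a connected component of its nodal set. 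If $\Si$ separates, then $u_2$ is $L^2$-orthogonal to the positive ground state, so it changes sign and thus has at least two nodal domains; by Courant's nodal domain theorem it has at most two, and since $\Si\subset u_2^{-1}(0)$ already divides $M$ into two pieces this forces $u_2^{-1}(0)=\Si$.

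The point I expect to require genuine care is the assertion that putting $\pd M$ inside the collapsing region $\Omc$ does not perturb the leading-order asymptotics of the small Dirichlet eigenvalues, i.e.\ that the limiting problem is still the Neumann problem on $\Om$. This is precisely what is guaranteed by the two ingredients singled out above---the elliptic bound~\eqref{boundhvk} for the harmonic extension with mixed boundary data and the positivity of $c_0$---after which what remains is a line-by-line transcription of Steps~1--4 of Section~\ref{S.main} together with the bookkeeping of the one-step index shift $\la_{k+1,\ep}\to\mu_k$.
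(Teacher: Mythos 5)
Your proposal is correct and follows essentially the same route as the paper: the published proof of Proposition~\ref{P.Payne} likewise reduces to rerunning Steps~1--4 of Section~\ref{S.main} on $H^1_0(M)$, with the harmonic extension $\hat v_k$ and the space $\cH_1$ carrying an extra Dirichlet condition on $\pd M$, and with exactly the one-step index shift $\la_{k+1,\ep}\to\mu_k$ that you describe. The only detail the paper makes explicit that you leave implicit is that the Thom-isotopy diffeomorphism should be taken to differ from the identity only in a small neighborhood of $\Si$, so that it fixes $\pd M$ and the pulled-back metric poses the same Dirichlet problem.
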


\begin{proof}
The proof goes along the lines of that of the main
theorem. Specifically, the construction of the metrics $g_0$
and~$g_\ep$ is exactly as in Step~1 of Section~\ref{S.main}. The
variational formulation of the Dirichlet eigenvalues of the associated
quadratic form~$Q_\ep$ is analogous, the main difference being that
its domain is now $H^1_0(M)$. A minor notational difference is that,
since the first eigenvalue is customarily denoted in this context by $\la_1$ instead
of $\la_0$, $\cW_k$ (and $\cW_k'$ in Step~3) now
consist of $k$-dimensional, instead of $(k+1)$-dimensional,
subspaces. 

Step~2 also carries over verbatim to the case of manifolds with
boundary, with the proviso that the harmonic extension $\hat v_k$ must
also include a Dirichlet boundary condition at $\pd M$, i.e.,
\[
\De_0\hat v_k=0\quad\text{in } M\minus\BOm\,, \qquad \hat
v_k|_{\pd\Om}=v_k\,,\qquad \hat v_k|_{\pd M}=0\,.
\]
Steps~3 and~4 also remain valid in the case of manifolds with
boundary, the only difference being that the space $\cH_1$ must also
include Dirichlet boundary conditions:
\[
\cH_1:=\big\{\vp\in H^1_0(M): \De_0\vp=0\;\text{in }\Omc\big\}\,.
\]
The proposition is then proved using the same reasoning as in Section~\ref{S.main}, noticing that the diffeomorphism $\Phi^j_{1,\epsilon}$ can be assumed to differ from the
  identity only in a small neighborhood of the
  hypersurface $\Sigma$.
\end{proof}

\section*{Acknowledgments}

A.E.~and D.P.S.~are respectively supported by the Spanish MINECO through the
Ram\'on y Cajal program and by the ERC
grant~335079. This work is supported in part by the
grants~FIS2011-22566 (A.E.), MTM2010-21186-C02-01 (D.P.S.) and
SEV-2011-0087 (A.E.\ and D.P.S.).

\bibliographystyle{amsplain}

\end{document}